\newtheorem{thm}{Theorem}[section]
\newtheorem{qu}[thm]{Question}
\theoremstyle{definition}
\theoremstyle{remark}
\begin{document}

\title{The Existence of Minimal Logarithmic Signatures for some Finite Simple Unitary Groups }

\author{\bf A. R. Rahimipour and A. R. Ashrafi$^\star$}

\footnote{$^\star$Corresponding author (Email: ashrafi@kashanu.ac.ir).}

\address{Department of Pure Mathematics, Faculty of Mathematical
Sciences, University of Kashan, Kashan 87317$-$51116, I. R. Iran}

\maketitle
\begin{abstract}
The $MLS$ conjecture states that
every finite simple group has a minimal logarithmic signature. The aim of this paper is
proving the existence of a minimal logarithmic signature for  some simple unitary groups $PSU_{n}(q)$. We report a gap in the proof of the main result of [H. Hong, L. Wang, Y. Yang, Minimal logarithmic signatures for the unitary
group $U_n(q)$, \textit{Des. Codes Cryptogr.} \textbf{77} (1) (2015) 179--191] and present a new proof in some special cases of this result. As a consequence,  the $MLS$ conjecture is still open.

\vskip 3mm

\noindent{\bf Keywords:} Minimal logarithmic signature, simple unitary group,  cryptosystem.

\vskip 3mm

\noindent{\it 2010 AMS Subject Classification Number:} $20D08$, $94A60$.
\end{abstract}

\bigskip

\section{Introduction}
In literature, there are several cryptosystems based on the properties of large abelian groups \cite{35}, but applications of non-abelian groups in constructing cryptosystems was started by publishing some pioneering papers of Magliveras and his co-authors \cite{19, 20, 21, 22, 23}. In \cite{19}, a symmetric key cryptosystem based on logarithmic signatures for finite non-abelian permutation groups was proposed and its algebraic properties were studied in \cite{23}. In \cite{24},  two possible approaches for constructing of new public key cryptosystems using group
factorizations were described.

Throughout this paper all groups are assumed to be finite. Suppose $G$ is such a group. A logarithmic signature (LS) for $G$ is an ordered tuple $\alpha = [A_1, \ldots, A_s]$ of subsets $A_i$ of $G$ such that any  element of $G$ can be uniquely decomposed into a product in the form $a_1a_2\cdots a_s$, where $a_i \in A_i$. The concept of logarithmic signature was first defined by  Magliveras and the existence of this type of factorizations  play an important role in secret  cryptosystems such as $PGM$ and  public key cryptosystems like  $MST_1$, $MST_2$ and $MST_3$ \cite{15,20}.

Suppose $G$ is a finite group with a logarithmic signature $\alpha(G) = [A_1, A_2, \cdots, A_s]$. The subsets $A_i$, $1 \leq i \leq s$, in $\alpha$ are called the blocks, and the vector $(r_1, \ldots, r_s)$, $r_i = |A_i|$, is the type of $\alpha$. The logarithmic signature $\alpha$ is said to be nontrivial if $s \geq 2$ and $r_i \geq 2$, for $1 \leq i \leq s$. The length of $\alpha$  is defined as $l(\alpha) = \sum_{i=1}^s|A_i|$. To apply effectively the logarithmic signature in practical problems, we have to define the logarithmic signature  with shortest length. Suppose $|G| = p_1^{\beta_1}\cdots p_k^{\beta_k}$, where $p_i$'s are primes and $\beta_i$'s are positive integers.  An observation by Gonz$\acute{\rm a}$lez Vasco and Steinwandt \cite{6}  shows that $l(\alpha) \geq \sum_{j=1}^k \beta_jp_j$ and so the logarithmic signature $\alpha$ is said to be minimal if $l(\alpha) = \sum_{j=1}^k \beta_jp_j$. A minimal logarithmic signature will be  abbreviated by $MLS$. It is a well-known  conjecture that every finite group has an $MLS$, which is still not proved in general. In \cite[Table 1]{28}, we presented a complete
history of MLS conjecture until now. We encourage the interested reader to consult
the mentioned paper and references therein for more information on this conjecture.

Before we can proceed further, we will need some basic definitions. We refer the interested readers to consult \cite{37} for more details. Suppose $V$ is a finite dimensional vector space over the field $K = GF(q^2)$, where $q$ is a prime power. A mapping $f: V \times V \longrightarrow K$ is said to be conjugate-symmetric sesquilinear form if for every $u, v, w \in V$ and $\lambda \in K$,
\begin{enumerate}
\item $f(\lambda u + v, w) = \lambda f(u,w) + f(v,w)$,

\item $f(u,v) = f(v,u)^q$.
\end{enumerate}

A result in \cite{8} states that the function $f : V \times V \longrightarrow GF(q^2)$ given by $$f(x,y) = tr_{\frac{GF(q^{2n})}{GF(q^2)}}xy^q = \sum_{i=0}^{n-1}(xy^q)^{(q^{2i})}$$ is a non-singular conjugate-symmetric sesquilinear form.

If $f$ is a conjugate-symmetric sesquilinear form then $f(v,v)$ is called the norm of $v \in V$ and we say that the vectors $u$ and $v$ are perpendicular or orthogonal, if $f(u,v) = 0$.
A non-zero vector which is perpendicular to itself is called isotropic. Suppose $W \leq V$. Define $W^\bot =\{ v \in V  | \ f(v,w) = 0; \ \forall w\in W   \}$ and $V^\bot$ is denoted by $rad(f)$. If $W \leq W^\bot$ then $W$ is called a totally isotropic subspace of $V$. A conjugate-symmetric sesquilinear form $f$ is called non-singular, if $rad(f)  = \{ 0\}$. An element $g \in GL(V)$ is called
isometry, if $f(g(u),g(v)) = f(u,v)$, for each $u,v \in V$.  The set of all isometries of $V$ is denoted by $GU_n(q)$. Moreover, the groups $SU_n(q) = SL_n(q^2) \cap GU_n(q)$ and $PSU_n(q) = \frac{SU_n(q)}{Z(SU_n(q))}$ are called the special unitary and projective special unitary groups, respectively.

A Singer cycle is an element $x$ in $GL_n(q)$ of order $q^n - 1$. If $G \leq GL_n(q)$ then the cyclic subgroup $S \cap G$, where $S = \langle x \rangle$ is called a Singer cycle subgroup of $G$. The image of Singer cycle subgroup of a group $G \leq GL_n(q)$ under canonical homomorphism is called a Singer subgroup in projective groups.

\section{Preliminary Results}
In this section our basic notions together with some known results are presented which are crucial throughout this paper.

Suppose $V$ is an $n-$dimensional vector space with projective space $P(V)$. If $W \leq V$ then by definition $P(W)$ is the corresponding subspace in $P(V)$. Set $S = \{ W_i \ | \ 1 \leq i \leq t\}$, where $W_i$ is an $r-$dimensional subspaces of $V$. If for each $i, j$ with $i \ne j$ we have $W_i \cap W_j = \langle 0 \rangle$, then the set $S$ is called an $r-$partial spread of $V$.  An $r-$spread is an $r-$partial spread with this property that $V = \bigcup_{i=1}^tW_i$. Note that for every $r-$partial spread or $r-$spread of $V$ the set $\widetilde{S} = \{ P(W) \ | \ W \in S\}$ is an  $r-$partial spread or $r-$spread of $P(V)$, respectively. Consider the special case of $V = GF(q^{2n})$ as a vector space of dimension $2n$ on $GF(q)$. We also assume that $\alpha$ is a primitive element of $V$ and $W = GF(q^n)$ is a subspace of $V$. If $Wx = \{ wx \ | \ w \in W\}$, $x \in V$, then it is easy to see that $$ Wx \cap Wy = \left\{ \begin{array}{ll} \langle 0 \rangle & y \notin Wx\\ Wx & y \in Wx \end{array} \right..$$
Hence, the set of all $Wx$, $x \in V$, is an $n-$spread for $V$ which is called the classical spread of $V$. This spread is obtained  from the set of all cosets of a Singer  cycle subgroup of order $q^n - 1$ in $V = GF(q^{2n})$. This proves that the classical spread is equal to $S = \{ W_i \ | \ 0 \leq i \leq q^n  \}$, where $W_i$ = $W\alpha^{i(q^n - 1)}$ = $\{ w\alpha^{i(q^n - 1)} \ | \  w \in W  \}$ and $0 \leq i \leq q^n $. Note that $W_0 = W = GF(q^n)$.

Let the group $G$ act on a set $X$. The action is called sharply transitive if for each $x, y \in X$ there exists a unique element $g \in G$ such that $y = gx$. Suppose $A \leq G$, $Y \subseteq X$ and $x \in Y$. We say that $A$ is sharply transitive on $Y$ with respect to $x$, if for each $y \in Y$ there exists a unique $a \in A$ such that $ax = y$.

If $V$ is a vector space and $s \in V$ then the map $T_s : V \longrightarrow V$ is defined as $T_s(v) = sv$. Suppose $\alpha$ is a primitive element of $V = GF(q^{2n})$ and $X \in GL_{2n}(q)$ is the corresponding matrix of $T_\alpha$. Then $C = \langle X \rangle$ is a Singer cycle subgroup. By construction of classical spread $S$, one can easily prove that the subgroup $A =\langle X^{q^n - 1}\rangle $  of order $q^n + 1$ is sharply transitive on $S$ with respect to $W_0 = GF(q^n)$. Moreover, $A$ is sharply transitive on $\widetilde{S}$ with respect to $P(W_0)$.

\begin{thm}\label{thm1} {\rm (}See \cite{37}{\rm )}
If $V$ is an $n-$dimensional vector space over the finite field $K = GF(q^2)$ and $f: V \times V \longrightarrow K$ is a conjugate-symmetric sesquilinear form. Then, the following hold:
\begin{enumerate}
\item If $n = 2m + 1$ then $V$ has a basis $\{ e_1,e_2, \ldots, e_m,w,f_1,f_2, \ldots, f_m\}$ such that
\begin{enumerate}
\item $f(e_i,e_j) = f(f_i,f_j) = 0$, $1 \leq i,j \leq m$;

\item $f(e_i,f_j) = -f(f_j,e_i) = \delta_{ij}$, $1 \leq i,j \leq m$;

\item $f(w,w) = 1$ and $f(e_i,w) = f(f_i,w) = 0$, $1 \leq i \leq m$.
\end{enumerate}
\item If $n = 2m$ then $V$ has a basis $\{ e_1, e_2, \ldots, e_m, f_1, f_2, \ldots, f_m\}$ which satisfies conditions 1$($a$)$ and 1$($b$)$.
\end{enumerate}
\end{thm}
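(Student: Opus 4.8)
The plan is to prove the statement by induction on the dimension $n$, in the spirit of a Gram--Schmidt / Witt decomposition adapted to the sesquilinear setting. The two basic facts I would isolate at the outset are: (i) for every $v\in V$ the norm $f(v,v)$ lies in the fixed field $GF(q)$, since $f(v,v)=f(v,v)^q$ by conjugate-symmetry; and (ii) the norm map $N\colon GF(q^2)^{\times}\to GF(q)^{\times}$, $N(\mu)=\mu^{q+1}$, is surjective, as is the trace map $Tr\colon GF(q^2)\to GF(q)$. These two surjectivity facts are what let me rescale anisotropic vectors to norm $1$ and, more importantly, solve the equations that produce isotropic vectors and hyperbolic partners.

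The heart of the argument is the following extraction lemma, which I would prove first: if $f$ is non-singular and $\dim V\ge 2$, then $V$ contains a nonzero isotropic vector, and hence a hyperbolic pair $\{e,g\}$ with $f(e,e)=f(g,g)=0$ and $f(e,g)=1$. To produce an isotropic vector I would take linearly independent $u,v$, set $a=f(u,u)$, $b=f(v,v)\in GF(q)$ and $c=f(u,v)$, and expand $f(u+\lambda v,u+\lambda v)=a+\lambda^q c+\lambda c^q+\lambda^{q+1}b$. If $b=0$ then $v$ is already isotropic; otherwise, completing the expression into the pattern $b(\lambda+c/b)^{q+1}$ reduces its vanishing to the single equation $N(\mu)=(c^{q+1}-ab)/b^2$ with $\mu=\lambda+c/b$, whose right-hand side lies in $GF(q)$. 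By fact (ii) (together with the trivial case where the right-hand side is $0$) this is solvable, giving an isotropic vector $u+\lambda v$. Once an isotropic $e$ is in hand, non-singularity of $f$ yields $v_0$ with $f(e,v_0)\ne 0$; I would then rescale $v_0$ to arrange $f(e,v_0)=1$ and correct it by a multiple $\beta e$, where $\beta$ is chosen via surjectivity of $Tr$ so that $g=v_0+\beta e$ becomes isotropic, producing the desired pair.

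With the extraction lemma available, the induction runs as follows. The span $H=\langle e,g\rangle$ of a hyperbolic pair is non-singular, so $V=H\perp H^{\bot}$ and the restriction of $f$ to $H^{\bot}$ is again non-singular of dimension $n-2$. Applying the inductive hypothesis to $H^{\bot}$ and relabelling $e=e_m$, $g=f_m$ produces the families $\{e_i\}$ and $\{f_i\}$ satisfying conditions $1(a)$ and $1(b)$; the companion values $f(f_j,e_i)$ are then forced by conjugate-symmetry. The base cases anchor the two parities: when $n=2m$ the process terminates with $H^{\bot}=\langle 0\rangle$, giving case $(2)$; when $n$ is odd it terminates with a non-singular $1$-dimensional space $\langle w\rangle$, and by facts (i)--(ii) I would rescale $w$ so that $f(w,w)=1$, which (since $w$ is orthogonal to every pair split off earlier) yields case $(1)$ together with its extra vector $w$.

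The step I expect to be the main obstacle is the extraction lemma, specifically guaranteeing that the norm equation $N(\mu)=(c^{q+1}-ab)/b^2$ is genuinely solvable: this is exactly where the finiteness of the field and the surjectivity of the norm map are indispensable, and it is the Hermitian analogue of the statement that a non-degenerate Hermitian plane over a finite field is always hyperbolic. A secondary technical point is the careful normalisation of the partner $g$ and the verification that the splitting $V=H\perp H^{\bot}$ is honest, i.e. $H\cap H^{\bot}=\langle 0\rangle$; both again rest on non-singularity of $f$ restricted to $H$. Everything else is routine bookkeeping carried along by the induction.
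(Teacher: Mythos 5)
The paper offers no proof of this theorem: it is quoted from Wilson's book \cite{37} with the tag ``(See \cite{37})'' and used as a black box, so there is no internal argument to compare yours against. What you give is the standard textbook proof of the existence of a Witt (hyperbolic) basis for a non-degenerate Hermitian space over a finite field, and it is correct: the expansion $f(u+\lambda v,u+\lambda v)=a+\lambda^{q}c+\lambda c^{q}+\lambda^{q+1}b$ is right, the completion to $b(\lambda+c/b)^{q+1}$ works precisely because $b=f(v,v)$ lies in $GF(q)$ and hence $b^{q}=b$, the resulting norm equation $N(\mu)=(c^{q+1}-ab)/b^{2}$ has its right-hand side in $GF(q)$ and is solvable by surjectivity of the norm (with the degenerate case $\mu=0$ handled separately, and $u+\lambda v\neq 0$ by linear independence), the trace trick $Tr(\beta)=-f(v_{0},v_{0})$ correctly normalises the hyperbolic partner, and the descent via $V=H\perp H^{\bot}$ with $f|_{H^{\bot}}$ again non-singular is the standard induction. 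Two remarks on the statement itself rather than on your argument. First, as printed the theorem omits the hypothesis that $f$ is non-singular, without which it is false (take $f=0$); you correctly reinstate it, and it is genuinely needed both to produce the partner $g$ of an isotropic $e$ and to give the one-dimensional base case a vector of nonzero norm to rescale. Second, your construction forces $f(f_{j},e_{i})=f(e_{i},f_{j})^{q}=\delta_{ij}$, so the displayed condition $-f(f_{j},e_{i})=\delta_{ij}$ in 1(b) can only hold in characteristic $2$; that minus sign is a typo carried over from the symplectic convention, and what you actually prove is the correct unitary normalisation as it appears in \cite{37}.
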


The Witt's lemma \cite[Theorem 3.3]{37} states that if $(V, f)$ and $(W, g)$ are isometric spaces, with $f$ and $g$ nonsingular, and  conjugate-symmetric sesquilinear forms then any isometry  between subspaces
$X$ of $V$ and $Y$ of $W$ can be extended to an isometry of $V$ with $W$. Suppose $P(V)$ is the set of all one dimensional subspaces of $V$. If $v$ is an isotropic vector of $V$ then $\langle v\rangle$ is called an isotropic point. By Witt's lemma, the groups $SU_n(q)$ and $PSU_n(q)$ have transitive actions on isotropic points of $P(V)$ and by \cite[Section 3.6]{37}, the number of isotropic points is $\frac{(q^n - (-1)^n)(q^{n-1} + (-1)^n)}{q^2-1}$.

In the next section,  some subgroups of $SU_n(q)$ and $PSU_n(q)$ for constructing $MLS$ are needed.  The construction of these subgroups are based on some results in \cite{2,12}. It is well-known that the symplectic groups, unitary groups of odd dimension and orthogonal groups of minus type have a Singer cycle subgroup.  The order of a Singer cycle subgroup in the general, special and projective special unitary groups $GU_n(q)$, $SU_n(q)$ and $PSU_n(q)$ are equal to $q^n + 1$, $\frac{q^n + 1}{q+1}$ and $\frac{q^n+1}{d(q+1)}$, respectively. Here $d = (n+1,q)$.

Babai et al. \cite[Table VI]{2} proved that the group $SU_n(q)$, $n$ is odd, has a cyclic subgroup of order $q^{n-1} - 1$ with generator

$$\left[\begin{array}{cc} c & 0 \\ 0 & (c^\ast)^{-1}
\end{array} \right],$$
where $c$ is a generator of a Singer cycle subgroup of $GL_{\frac{n-1}{2}}(q^2)$ and $c^\ast$ denotes the image of the transpose of $c$ under automorphism $a \mapsto a^q$.

If $H \unlhd G$ then the canonical homomorphism from $G$ onto $\frac{G}{H}$ is denoted by $\eta$. The following two results from Singhi and Singhi  \cite{31} were crucial in the  proof of the main result of \cite{11}.

\begin{thm} \label{thm2}
Let $H$ be a normal subgroup of $G$ and $A \subseteq G$ has the property that if $a, b \in A$ and $a \ne b$ then $aH \ne bH$. Let $A^\prime = \eta(A)$ and $[A_1,A_2, \cdots, A_k]$ be an $LS$ for $A$. Define $B_i = \eta(A_i)$, $1 \leq i \leq k$. Then  $[B_1,B_2, \cdots, B_k]$ is an $LS$ for $A^\prime$.
\end{thm}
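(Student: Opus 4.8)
Since $\eta\colon G \to G/H$ is a homomorphism and $A' = \eta(A)$, $B_i = \eta(A_i)$, recall that saying $[A_1,\ldots,A_k]$ is an $LS$ for the set $A$ means that the product map $\mu\colon A_1\times\cdots\times A_k\to G$, $(a_1,\ldots,a_k)\mapsto a_1\cdots a_k$, restricts to a bijection onto $A$; in particular every product $a_1\cdots a_k$ already lies in $A$. The goal is to show that the analogous map $\nu\colon B_1\times\cdots\times B_k\to G/H$ is a bijection onto $A'$. The plan is to separate this into covering and uniqueness, since the first is automatic while the hypothesis on $A$ is used only for the second.

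For covering, I would start from an arbitrary $a'\in A'$ and lift it to some $a\in A$ with $\eta(a)=a'$. As $[A_1,\ldots,A_k]$ is an $LS$ for $A$, write $a=a_1\cdots a_k$ with $a_i\in A_i$; applying $\eta$ and using that it is multiplicative gives $a'=\eta(a_1)\cdots\eta(a_k)$ with $\eta(a_i)\in B_i$. Hence every element of $A'$ is a product of one element from each $B_i$, so $\nu$ maps onto $A'$.

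For uniqueness, which I expect to be the only real point, suppose $a'\in A'$ has two such factorizations $a'=b_1\cdots b_k=b_1'\cdots b_k'$ with $b_i,b_i'\in B_i$. I would choose lifts $a_i,a_i'\in A_i$ with $\eta(a_i)=b_i$ and $\eta(a_i')=b_i'$. Because the product map of the $A_i$ lands in $A$, both $x=a_1\cdots a_k$ and $y=a_1'\cdots a_k'$ lie in $A$, and $\eta(x)=b_1\cdots b_k=a'=b_1'\cdots b_k'=\eta(y)$. The crux is exactly here: $x$ and $y$ then lie in a common $H$-coset, so the hypothesis that $A$ meets each coset of $H$ at most once forces $x=y$. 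Unique factorization of $x=y$ inside $A$ now yields $a_i=a_i'$ for all $i$, whence $b_i=\eta(a_i)=\eta(a_i')=b_i'$. This gives uniqueness and, together with covering, proves that $[B_1,\ldots,B_k]$ is an $LS$ for $A'$.

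The single delicate step is the implication $\eta(x)=\eta(y)\Rightarrow x=y$, i.e.\ the injectivity of $\eta$ on $A$; without the coset hypothesis one keeps covering but may lose uniqueness. If one prefers a counting proof, the same hypothesis shows that $\eta$ is injective on each block: if $a_i\ne a_i'$ in $A_i$ had $\eta(a_i)=\eta(a_i')$, then fixing any completion $a_j\in A_j$ ($j\ne i$) would produce two distinct elements of $A$ lying in one coset, a contradiction. Thus $|B_i|=|A_i|$, so $\prod_i|B_i|=\prod_i|A_i|=|A|=|A'|$, and a surjection between finite sets of equal cardinality is a bijection, recovering the result.
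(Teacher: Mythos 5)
Your proof is correct. Note that the paper itself gives no proof of this statement: it is quoted verbatim as a result of Singhi and Singhi (reference [31]) and used as a black box, so there is nothing internal to compare against. Your argument is the natural one and is complete: surjectivity of the product map onto $A'=\eta(A)$ is immediate from multiplicativity of $\eta$, and injectivity is exactly where the coset hypothesis enters, since two factorizations in the $B_i$ lift to elements of $A$ lying in the same coset of $H$, which the hypothesis forces to coincide; you also correctly observe that the choice of lifts is immaterial (and, in the counting variant, that $\eta$ is injective on each block, so $\prod_i|B_i|=|A|=|A'|$). Both of your routes are valid.
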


\begin{thm} \label{thm3}
Suppose $V$ is a finite dimensional vector space over $GF(q)$, $f$ is a non-singular conjugate-symmetric sesquilinear form. Suppose  $L$  is one of the following subsets of $P(V)$:
\begin{itemize}
\item the set of all points of $P(V)$,
\item the set of all isotropic points of $P(V)$ with respect to the form $f$.
\end{itemize}
We also assume that $G \leq GL(V)$ is a transitive permutation group on $L$, $S$ is an $r-$partial spread in $V$ such that $\widetilde{S}$  partitions $L$,  $W \in S$ and $w \in P(W)$. Suppose there exist sets $A$, $B �\subseteq  G$ such that
\begin{enumerate}
\item $A$ acts sharply transitively on $S$ with respect to $W$ under the action of $G$ on the set of
all $r-$dimensional subspaces of $V$.

\item $B$ acts sharply transitively on $L \cap P(W)$ with respect to $w$ under the action of $G$ on
$P(W)$.
\end{enumerate}
Then $[A, B, G_w]$ is an $LS$ for $G$.
\end{thm}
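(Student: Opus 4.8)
The plan is to prove directly that every element $g \in G$ admits a unique factorization $g = abh$ with $a \in A$, $b \in B$ and $h \in G_w$, which is exactly the assertion that $[A,B,G_w]$ is an $LS$ for $G$. The whole argument is a three-step ``peeling'' procedure that tracks the image of the distinguished point $w \in L \cap P(W)$ under $g$. Throughout, I would use two standing facts: since $G$ is a permutation group on $L$, every element of $G$ (in particular every $a \in A$ and its inverse) maps $L$ bijectively onto $L$, and a linear map sends $P(U)$ to $P(aU)$. The hypothesis that $\widetilde S$ partitions $L$ means precisely that $L = \bigsqcup_{U \in S}\bigl(L \cap P(U)\bigr)$, so every point of $L$ lies in exactly one block $L \cap P(U)$ with $U \in S$.

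For existence, fix $g \in G$ and consider the point $gw \in L$. By the partition property there is a unique $W' \in S$ with $gw \in L \cap P(W')$. Condition (1) supplies a unique $a \in A$ with $aW = W'$, and then $a^{-1}gw \in a^{-1}\bigl(L \cap P(W')\bigr) = L \cap P(W)$, because $a^{-1}$ preserves $L$ and sends $P(W')$ to $P(W)$. Condition (2) now supplies a unique $b \in B$ with $bw = a^{-1}gw$. Setting $h = b^{-1}a^{-1}g$ we obtain $hw = b^{-1}(a^{-1}gw) = b^{-1}(bw) = w$, so $h \in G_w$ and $g = abh$.

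For uniqueness, suppose $a_1b_1h_1 = a_2b_2h_2$ with $a_i \in A$, $b_i \in B$, $h_i \in G_w$. Evaluating both sides at $w$ and using $h_iw = w$ gives $a_1(b_1w) = a_2(b_2w) =: u$. Since $b_iw \in L \cap P(W)$ and each $a_i$ preserves $L$ while sending $P(W)$ to $P(a_iW)$ with $a_iW \in S$, the point $u$ lies in both blocks $L \cap P(a_1W)$ and $L \cap P(a_2W)$; as distinct blocks of the partition are disjoint, these coincide, forcing $a_1W = a_2W$ and hence $a_1 = a_2$ by the sharp transitivity in (1). Cancelling $a_1$ and evaluating at $w$ again yields $b_1w = b_2w$, whence $b_1 = b_2$ by the sharp transitivity in (2), and finally $h_1 = h_2$.

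The one point that needs genuine care — and which I expect to be the main obstacle in writing this cleanly — is the bookkeeping of the two different $G$-actions and their compatibility with the partition: one must verify that each $a \in A$ carries the block $L \cap P(W)$ bijectively onto the block $L \cap P(aW)$, so that $A$ genuinely permutes the blocks and they share the common size $|L \cap P(W)|$. This also furnishes the consistency check $|A|\,|B|\,|G_w| = |S|\cdot|L \cap P(W)|\cdot |G|/|L| = |L|\cdot|G|/|L| = |G|$, using transitivity of $G$ on $L$ together with $|L| = |S|\cdot|L \cap P(W)|$; the matching cardinality confirms that the factorization produced above does exhaust $G$.
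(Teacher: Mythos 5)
Your argument is correct: the existence step (locate the unique $W'\in S$ with $gw\in L\cap P(W')$, peel off the unique $a$ with $aW=W'$, then the unique $b$ with $bw=a^{-1}gw$, and absorb the rest into $G_w$) and the uniqueness step via evaluation at $w$ and disjointness of the blocks $L\cap P(aW)$ are exactly what is needed, and your flagged caveat---that sharp transitivity must be read as saying $a\mapsto aW$ is a bijection of $A$ onto $S$, so that each $a\in A$ carries $L\cap P(W)$ onto the block $L\cap P(aW)$---is the right point to be careful about. Note that the paper itself contains no proof of this statement (it is quoted from Singhi and Singhi \cite{31}), and the argument given there is essentially the same peeling procedure, so you have not taken a different route from the source.
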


Following Holmes \cite{9}, we assume that $G$ has an $MLS$ $\alpha = [A_1, \cdots, A_s]$, where $A_1$ is the sequence of all elements in a subgroup $H$ and $\sum_{i=2}^s|A_i|$ = $\sum_{i=1}^ka_ip_i$, where $[G:H]$ = $p^{a_1} \cdots p^{a_k}$. Then we say that $G$ has an $MLS$ over $H$. We write also $\alpha = [H, A_2, \cdots, A_s]$ to say that $G$ has an $MLS$ over the subgroup $H$.

\begin{thm} \label{thm4} {\rm (}See \cite[Lemma 1.1]{28}{\rm )}
The following  hold:
\begin{enumerate}
\item If $G$ has an $MLS$ over a subgroup $H$ then it has an $MLS$ over any conjugate of $H$,

\item Every finite nilpotent group has an $MLS$ over all of its subgroups,

\item Every finite solvable group has an $MLS$ over its normal and its Hall subgroups,

\item Every finite solvable group $G$ has an $MLS$ over all of its $p-$subgroups, where $p$ is a prime divisor of $|G|$.
\end{enumerate}
\end{thm}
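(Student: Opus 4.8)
The plan is to isolate a single construction that drives all four parts and then assemble the statements from it. Call it the \emph{chain construction}: if there is a chain of subgroups $H = H_0 \sub H_1 \sub \cdots \sub H_m = G$ in which every index $[H_i : H_{i-1}]$ is a prime $p_i$, then choosing a right transversal $T_i$ of $H_{i-1}$ in $H_i$ (so $|T_i| = p_i$) yields an $LS$ $[H, T_1, \ldots, T_m]$ for $G$. Indeed, decomposing $g \in G$ step by step through the right cosets $H_{i-1}\backslash H_i$ gives a unique expression $g = a_0 t_1 \cdots t_m$ with $a_0 \in H$ and $t_i \in T_i$; no normality is needed for this. The tail length is $\sum_i p_i$, and collecting equal primes gives $\sum_i p_i = \sum_q a_q q$ where $[G:H] = \prod_q q^{a_q}$, so $[H, T_1, \ldots, T_m]$ is an $MLS$ over $H$. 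I will also use the auxiliary fact that every finite solvable group $K$ has an $MLS$, proved by induction on $|K|$: since $K/K'$ is abelian one finds a normal subgroup $N \nor K$ of prime index $p$, and appending a size-$p$ transversal of $N$ to an $MLS$ of $N$ produces an $MLS$ of $K$, because the minimal length increases by exactly $p$.

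Part (1) is immediate from the fact that conjugation by a fixed $g$ is an automorphism of $G$: applying it to every block of an $MLS$ $[H, A_2, \ldots, A_s]$ preserves unique factorization and all block sizes, and sends the first block $H$ to $H^g$ while leaving $[G:H] = [G:H^g]$ unchanged; hence $[H^g, A_2^g, \ldots, A_s^g]$ is an $MLS$ over $H^g$. For Part (2), in a nilpotent group every subgroup $H$ is subnormal, so there is a chain $H = H_0 \nor H_1 \nor \cdots \nor H_m = G$; since each factor $H_i/H_{i-1}$ is nilpotent, hence solvable, its composition factors have prime order, so the chain refines to one with prime indices and the chain construction gives an $MLS$ over $H$.

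For Part (3), the normal case is handled directly: if $H \nor G$ then $G/H$ is solvable, a composition series of $G/H$ has prime indices, and pulling it back to $G$ gives a prime-index chain from $H$ to $G$. The Hall case is the step that needs the structure theory of solvable groups. If $H$ is a Hall $\pi$-subgroup, then by P.\ Hall's theorem $G$ has a Hall $\pi'$-subgroup $K$; as $|H|$ and $|K| = [G:H]$ are coprime with product $|G|$, we get $H \cap K = 1$ and $G = HK$, so $K$ is a right transversal of $H$. The subgroup $K$ is solvable, hence has an $MLS$ $[C_1, \ldots, C_r]$ whose length equals the minimal bound for $|K| = [G:H]$; since each $C_i \sub K$, combining the unique factorization $G = HK$ with the $LS$ of $K$ shows that $[H, C_1, \ldots, C_r]$ is an $LS$, and it is an $MLS$ over $H$.

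Finally, Part (4) follows by composing the previous two. Given a $p$-subgroup $Q$, embed $Q \sub P$ in a Sylow $p$-subgroup $P$, which is a Hall $\{p\}$-subgroup; Part (3) gives an $MLS$ $[P, A_2, \ldots, A_s]$ over $P$ with tail length the minimal bound for $[G:P]$, and Part (2) applied to the nilpotent group $P$ gives an $MLS$ $[Q, B_1, \ldots, B_t]$ over $Q$ inside $P$ with tail length the minimal bound for $[P:Q]$. Replacing the first block $P$ of the former by the factorization of the latter yields $[Q, B_1, \ldots, B_t, A_2, \ldots, A_s]$, an $LS$ for $G$; since $[G:P]$ is prime to $p$ and $[P:Q]$ is a power of $p$, the minimal bound is additive over this coprime factorization and the total tail length equals the bound for $[G:Q]$, as required. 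I expect the main obstacle to be the Hall case of Part (3): it is the only place relying on nontrivial external input (existence of Hall $\pi'$-complements in solvable groups, together with the fact that solvable groups admit $MLS$s), whereas every other step is an elementary transversal or automorphism argument.
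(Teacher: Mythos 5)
Your proof is correct. The paper itself contains no proof of this statement --- it is quoted verbatim from \cite[Lemma 1.1]{28} --- and your argument (prime-index transversal chains for the subnormal, nilpotent and normal cases; a Hall $\pi'$-complement $K$ with $G=HK$ and $H\cap K=1$ for the Hall case; and splicing an $MLS$ of a Sylow $p$-subgroup over a given $p$-subgroup into an $MLS$ of $G$ over that Sylow subgroup, with additivity of the minimal-length bound across the coprime factorization) is essentially the standard one used in that reference and in the earlier Lempken--van Trung and Gonz\'alez Vasco--R\"otteler--Steinwandt papers.
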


A subset $Y$ of a group $G$ is called a cyclic set if there exists an
element $x \in G$ of order $\geq |Y|$ such that $Y = \{ x^i \ | \
0 \leq i \leq |Y|-1\}$. Suppose $\alpha = [A_1, \ldots, A_s]$ is an $MLS$ for  $G$. In such a case, the cyclic subset Y has an MLS \cite[Lemma 3.10]{32}. The $MLS$ $\alpha$
is called cyclic, if all of its blocks are cyclic subsets of
$G$.

Lempken and van Trung \cite[Theorem 4.1]{14} presented their
strong method for constructing an $MLS$ for a finite group $G$ by
using $MLS$ of its subgroups. We mention here the following new form of this result given by the present authors \cite[Theorem 2.1]{28}.

\begin{thm}\label{thm5}
Suppose $G$ is a finite group and
$H, K \leq G$ such that $H \cap gKg^{-1} = 1$, for all $g \in G$.
Suppose $G = \cup_{i=1}^n Hg_iK$ is the double coset
decomposition of $G$ with respect to $H$ and $K$ and $\{
g_i\}_{i=1}^n = \prod_{i=1}^mA_i$, where $A_i \subseteq G$ for $1
\leq i \leq m$ and $n = \prod_{i=1}^m|A_i|$. Moreover, we assume
that $H$ and $K$ have an $MLS$. The group $G$ has an $MLS$, if $n=1$ or for each $i$,
one of the following occurs:
\begin{itemize}
\item  $A_i$ is a cyclic subset,
\item $A_i$ is a subgroup of $G$ that has $MLS$,
\item $|A_i|$ is $4$ or a prime number.
\end{itemize}
\end{thm}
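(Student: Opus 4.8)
The plan is to manufacture an explicit logarithmic signature for $G$ directly out of the double coset data, and then to show that bringing it to minimal length is controlled exactly by the three permitted types of block $A_i$. The guiding observation is that the hypothesis $H\cap gKg^{-1}=1$ forces every double coset to have full size $|H||K|$: for any $g$, two pairs $(h,k),(h',k')\in H\times K$ give $hgk=h'gk'$ iff $h'^{-1}h=g(k'k^{-1})g^{-1}\in H\cap gKg^{-1}=1$, so $(h,k)\mapsto hgk$ is a bijection onto $Hg_iK$. Since the double cosets partition $G$, the map $H\times\{g_i\}_{i=1}^n\times K\to G$, $(h,g_i,k)\mapsto hg_ik$, is a bijection, and in particular $|G|=n|H||K|$.

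First I would convert this bijection into a (not yet minimal) logarithmic signature. The hypothesis $\{g_i\}_{i=1}^n=\prod_{i=1}^m A_i$ with $n=\prod_{i=1}^m|A_i|$ says exactly that $(a_1,\dots,a_m)\mapsto a_1\cdots a_m$ is a bijection $A_1\times\cdots\times A_m\to\{g_i\}$; composing with the previous bijection yields a unique factorization $g=h\,a_1\cdots a_m\,k$ of each $g\in G$. Inserting an $MLS$ $[H_1,\dots,H_p]$ for $H$ on the left and an $MLS$ $[K_1,\dots,K_q]$ for $K$ on the right (both exist by hypothesis), the concatenation $[H_1,\dots,H_p,A_1,\dots,A_m,K_1,\dots,K_q]$ is an $LS$ for $G$, being a composite of set bijections. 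When $n=1$ there are no blocks $A_i$ and this is already the signature $[\,MLS(H),MLS(K)\,]$.

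It then remains to secure minimal length, which reduces to refining each $A_i$ to a factorization of length $\sum_p v_p(|A_i|)\,p$, where $v_p$ denotes $p$-adic valuation. Since the minimal length $\sum_p v_p(\cdot)\,p$ is additive over $|G|=n|H||K|$ and over $n=\prod_i|A_i|$, the target value $\sum_p v_p(|G|)p$ splits as $l(\alpha(H))+\sum_i\big(\sum_p v_p(|A_i|)p\big)+l(\alpha(K))$; as $\alpha(H),\alpha(K)$ are minimal, the signature above becomes an $MLS$ as soon as each $A_i$ is replaced by a block-decomposition of length $\sum_p v_p(|A_i|)p$. I would verify this case by case: a cyclic set of size $r_i$ factors as a product of cyclic sets of prime sizes, by splitting $\{x^0,\dots,x^{r_i-1}\}$ according to $r_i=\prod p^{\beta}$, giving length $\sum_p v_p(r_i)p$; a subgroup with an $MLS$ is replaced by that $MLS$; a block of prime size $p$ is kept whole, contributing exactly $p$; and a block of size $4$ is kept whole, contributing $4=2+2=v_2(4)\cdot 2$, again minimal. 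Each replacement is an exact set-factorization of $A_i$, so refining the corresponding block preserves the $LS$ property, and the lengths sum to the minimal value.

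The one genuinely structural step—the rest being bookkeeping—is the size computation of the first paragraph: it is the conjugacy-uniform condition $H\cap gKg^{-1}=1$ (rather than the weaker $H\cap K=1$) that guarantees uniqueness of the factorization $g=hg_ik$ simultaneously in every double coset, and hence that the concatenated tuple is an honest logarithmic signature rather than merely a spanning product. The delicate point in the minimality bookkeeping is the exceptional value $4$: among prime powers $p^{e}$ with $e\geq 2$ only $p^{e}=4$ satisfies $p^{e}=e\,p$, so it is the sole composite size that may be left as a single block without increasing the length, which is precisely why the hypothesis isolates it alongside the primes.
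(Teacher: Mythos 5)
Your proposal is correct and is essentially the standard argument behind this result: the condition $H\cap gKg^{-1}=1$ makes $(h,g_i,k)\mapsto hg_ik$ a bijection $H\times\{g_i\}\times K\to G$, the transversal is factored through the $A_i$, and each permitted block type is refined (or left whole) to length $\sum_p v_p(|A_i|)\,p$ using the cyclic-subset factorization of \cite[Lemma 3.10]{32}. The paper itself gives no proof, quoting the statement from \cite[Theorem 2.1]{28} as a reformulation of \cite[Theorem 4.1]{14}, and your argument is exactly the double-coset construction used there, so it serves as a complete self-contained proof along the intended lines.
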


Holmes \cite{9} presented six very important conditions for existence an $MLS$ for a finite group. We mention below  one of conditions which is important in proving our main result.

\begin{thm} \label{thm6} {\rm (} Holmes \cite[Condition 2.4]{9}{\rm )} If $G$ contains two subgroups $H$ and $K$ such that
\begin{itemize}
\item $H$ has an $MLS$,
\item $K$ acts transitively in the permutation representation of $G$ on the cosets of $H$,
\item $K$ has an $MLS$ over $K \cap H$,
\end{itemize}
then $G$ has an $MLS$.
\end{thm}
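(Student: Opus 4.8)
The plan is to build an MLS for $G$ by concatenating an MLS of $H$ with the non-subgroup part of the MLS of $K$ over $K\cap H$, and then to check separately that the result is a logarithmic signature and that its length meets the lower bound. First I would extract the numerical consequence of transitivity. Working with the right-multiplication action of $G$ on the right cosets $H\backslash G$ (transitivity here is equivalent to the stated transitivity, since $G=KH$ if and only if $G=HK$), the stabilizer in $K$ of the trivial coset $H$ is exactly $K\cap H$, so the orbit–stabilizer theorem gives $[K:K\cap H]=[G:H]$. Moreover the orbit map $k\mapsto Hk$ induces a bijection between the right cosets of $K\cap H$ in $K$ and the right cosets of $H$ in $G$; consequently any complete set of right coset representatives for $K\cap H$ in $K$ is simultaneously a complete set of right coset representatives for $H$ in $G$.

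Next I would unwind the two hypotheses. Let $[B_1,\ldots,B_t]$ be an MLS for $H$, and let $[K\cap H, C_2,\ldots,C_r]$ be an MLS for $K$ over $K\cap H$, so that the first block is the whole subgroup $K\cap H$ and $\sum_{i=2}^r|C_i|=\sum_j a_jp_j$ where $[K:K\cap H]=\prod_j p_j^{a_j}$. The defining property of this signature says each $k\in K$ has a unique expression $k=n\,c_2\cdots c_r$ with $n\in K\cap H$ and $c_i\in C_i$; hence the products $c_2\cdots c_r$ form a complete set of representatives for the right cosets of $K\cap H$ in $K$, and therefore, by the bijection above, for the right cosets of $H$ in $G$.

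I would then assemble the candidate $\alpha=[B_1,\ldots,B_t,C_2,\ldots,C_r]$ and verify it is an $LS$ for $G$. Given $g\in G$, the right coset $Hg$ equals $Hr$ for a unique representative $r=c_2\cdots c_r$, and uniqueness of the expression in $K$'s signature pins down the tuple $(c_2,\ldots,c_r)$; the residual factor $h=gr^{-1}$ lies in $H$ and, by the MLS of $H$, has a unique expression $h=b_1\cdots b_t$. Thus every $g$ factors uniquely as $b_1\cdots b_t\,c_2\cdots c_r$, so $\alpha$ is a logarithmic signature for $G$.

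Finally I would check minimality, which is where the real content lies. Writing $|H|=\prod_j p_j^{\gamma_j}$, minimality of the two given signatures gives $\sum_i|B_i|=\sum_j\gamma_jp_j$ and $\sum_{i=2}^r|C_i|=\sum_j a_jp_j$. Since $[K:K\cap H]=[G:H]$ forces $\prod_j p_j^{a_j}=|G|/|H|$, the factorization of $|G|$ is $\prod_j p_j^{\gamma_j+a_j}$, whence the length of $\alpha$ is $\sum_j\gamma_jp_j+\sum_j a_jp_j=\sum_j(\gamma_j+a_j)p_j$, precisely the lower bound, so $\alpha$ is an MLS. The hard part is purely the bookkeeping: keeping the left/right coset conventions consistent so that the coset bijection is valid, and matching the two exponent vectors so the partial length sums add to the global minimal length.
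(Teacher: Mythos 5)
Your proof is correct. Note that the paper does not prove this statement at all --- it is quoted verbatim from Holmes \cite[Condition 2.4]{9} --- so there is no in-paper argument to compare against; your write-up supplies the standard argument one expects behind the citation. The key steps are all sound: right-multiplication transitivity of $K$ on $H\backslash G$ gives stabilizer $K\cap H$ and hence the bijection $(K\cap H)k \mapsto Hk$ between transversals; uniqueness of the factorization $k = n\,c_2\cdots c_r$ in the signature $[K\cap H, C_2,\ldots,C_r]$ shows the products $c_2\cdots c_r$ form a right transversal of $K\cap H$ in $K$, hence of $H$ in $G$; concatenation with an MLS of $H$ then yields an LS for $G$ whose length is $\sum_j(\gamma_j+a_j)p_j$, matching the Gonz\'alez Vasco--Steinwandt bound because $|G|=|H|\,[K:K\cap H]$. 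The only cosmetic issue is the clash between $r$ as a block index and $r$ as a coset representative.
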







We refer the interested readers to consult the interesting work of Liebeck et al. \cite{17}
 and the famous book of Dixon for other useful results related to our techniques \cite{3}.

\section{The Proof of \cite{11}}
The aim of this section is to show that the proof of existence of $MLS$ for the unitary groups $PSU_n(q),$ $SU_n(q)$, $PGU_n(q)$ and $GU_n(q)$ given by Hong et al. \cite{11} is not correct. As a consequence of our discussion, the $MLS$ conjecture for unitary groups is still open.

Singhi and Singhi \cite{31}, presented a strong method for construction of an $MLS$ based on parabolic subgroups and partitions obtained from classical spread in $P(V)$.

The process of proof in \cite{11} is as follows:

\begin{enumerate}
\item Construction of parabolic subgroup and proof of existence an $MLS$ for this subgroup. The structure of parabolic subgroups of unitary groups are considered into account in \cite{25,37}.

\item Applying the partition of an $r-$partial spread in the space of $P(V)$.

\item The proof of existence some special subgroups in unitary group by using some results in \cite{2}.
\end{enumerate}

In continue, we assume that $n$ is odd, $q = p^s$ is a prime power and $V = GF(q^{2n})$.  Consider $V$ as a vector space on dimension $n$ on $GF(q^2)$. Define the conjugate-symmetric sesquilinear form $f$ by $f(x,y)$ = $tr_{GF(q^{2n})/GF(q^2)}xy^q$ = $\sum_{i=0}^{n-1}(xy^q)^{q^{2i}}$.  Then the isometry group of $(V,f)$ will be isomorphic to $GU_n(q)$. Suppose $G = SU_n(q)$. The main proof of \cite{11} is based on Theorem \ref{thm3}. We apply this result in the case  that $G = SU_n(q)$ and $L$ is the set of all isotropic points of the inner product space $(V,f)$. The authors of \cite{11} used the partial spread $S_3$ = $\{ W^\prime_i \ | \ 0 \leq i \leq \frac{q^n+1}{q+1}\}$ and claimed that $S_3$ is classical. To prove this result, they assumed that $n = 2m+1$, $W^\prime = W^\prime_0 = GF(q^{2m})$ and $W^\prime_i = W^\prime_0\alpha^{i(q^m-1)}$.

Notice that $V=GF(q^{2n})$ is a vector space of dimension $2n$ on $GF(q)$ and $W = W_0 = GF(q^n)$ is simultaneously its subspace and its unique subfield of order $q^n$. In \cite{11}, it is claimed that $W^\prime_i$'s are totally isotropic subspace. In what follows,  we will show that this claim is incorrect.
\begin{enumerate}
\item Since $f(1,1)=tr_{\frac{GF(q^{2n})}{GF(q^2)}}1$ = $\sum_{i=0}^{n-1}1$ = $n$, $f(1,1) = 0$ implies that $p | n$. Hence $1 \in W^\prime$ and the point $1$ is not necessarily an isotropic point with respect to $f$.

\item Let $W^\prime = W_0^\prime = GF(q^{2m})$ be a totally isotropic subspace. Then for some $i,j$, $W^\prime_i \cap W^\prime_j \ne \langle 0 \rangle$ and $S_3$ is not necessarily a partition of all isotropic points of $P(V)$.

\item If $W = W^\prime_0 = GF(q^n)$ then $S_3$ is classical and if $W^\prime_0 $ is totally isotropic then for odd $n$, the subgroup $A$ = $\langle x^{(q^n-1)(q+1)} \rangle$ of order $\frac{q^n+1}{q+1}$, the  singer cycle subgroup of $GL_{2n}(q)$, is also a subgroup of $SU_n(q)$. Notice that $SU_n(q)$ acts transitive on the set of all isotropic points. On the other hand, $A$ is sharply transitive on the spread $\tilde{S_3}$ with respect to $P(W_0^\prime)$ then there must be  $\frac{(q^n+1)(q^{n}-1)}{(q+1)(q^2-1)}$ isotropic points  which contradicts by this fact that $P(V)$ has exactly $\frac{(q^n+1)(q^{n-1}-1)}{q^2-1}$ isotropic points.
\end{enumerate}

We now consider the existence problem of subgroups of $SU_n(q)$ that applied in \cite{11} for the construction of an  $MLS$. To do this, we assume that $$B = \{ e_1, e_2, \ldots, e_m, v, f_1, f_2, \ldots, f_m\}$$ is a basis for the vector space $V$ constructed in \cite[Proposition 1]{11}, where $$\{ e_1, e_2, \ldots, e_m\}$$ is a basis for the totally isotropic space $W$. To construct the subgroups $A$ and $B$ in \cite[Proposition 4]{11}, the authors used the  maximal tori subgroups of $SU_n(q)$ given in \cite{2}. If $n$ is odd then $SU_n(q)$ has a Singer cycle subgroup $A_3$ of order $\frac{q^n+1}{q+1}$. In \cite{11}, it is assume that $|A_3| = q^n + 1$ which is trivially incorrect. On the other hand, the group   $SU_n(q)$ has a cyclic subgroup of order $q^{n-1} - 1$ with generator $b_3$ such that
$$b_3 = \left[\begin{array}{cc} c & 0 \\  0 & (c^\ast)^{-1}
\end{array} \right],$$
where $c$ is a Singer cycle element of $GL_{\frac{n-1}{2}}(q^2)$. Suppose $B_3 = \langle b_3 \rangle$ and $D_3 = \langle b_3^{\frac{q^{n-1}-1}{q^2 -1}} \rangle$ is a subgroup of order $q^2 -1$ in $B_3$. Since $V$ is a vector space over $GF(q^{2})$, by \cite[Remark 6.1]{32} the left transversal $B^\prime_3 \in lt(B_3,D_3)$ of order $\frac{q^{n-1} - 1}{q^2 - 1}$ is sharply transitive on $P(W^\prime)$ with respect to $\langle e_1 \rangle$. Then the authors assumed that $|B^\prime_3| = \frac{q^{n-1} - 1}{q -1 }$  which is incorrect. As we mentioned, the number of isotropic points of $P(V)$ is equal to $\frac{(q^{n-1} - 1)(q^n+1)}{q^2-1}$ and so $A_3B_3^\prime$ which has order at most $\frac{(q^{n-1} - 1)(q^n+1)}{(q+1)(q^2-1)}$ cannot be sharply transitive on the set of all isotropic points of $P(V)$.

By \cite{2}, the mentioned tori subgroups are containing $Z(G)$ and the image of these subgroups under canonical homomorphism in the simple group $PSU_n(q)$ have orders $\frac{q^n+1}{d(q+1)}$ and $\frac{q^{n-1} - 1}{d}$, where $d = (n,q+1)$. Therefore, by using the product of the image of $A_3$ and $B_3^\prime$ under canonical homomorphism cannot construct a set which is sharply transitive on the set of all isotropic points of $P(V)$.

In this section, it is shown that the proof of Hong et al. \cite{11} in proving the existence of $MLS$ for unitary groups $PSU_{2n+1}(q)$ and $SU_{2n+1}(q)$ are incorrect. A similar argument shows that the proof of existence $MLS$ for $PSU_{2n}(q)$ and $SU_{2n}(q)$  are also incorrect. So, the following question is still open:

\begin{qu}
Is there an $MLS$ for simple unitary groups?
\end{qu}

\section{ Existence of $MLS$ for some Unitary Simple Groups}

The aim of this section is to prove the existence of $MLS$ in  some simple unitary groups. In the following theorem it is proved that if the group $SU_{2n-1}(q)$ has an $MLS$ then the projective special linear group $PSU_{2n}(q)$ has an $MLS$. As a consequence the problem of existence $MLS$ for the  projective special unitary group $PSU_{n}(q)$ will be reduced to the problem of existence for $PSU_{n}(q)$, when  $n$ is odd.

\begin{thm} \label{4.1}
Suppose $q$ is a prime power and $n \geq 2$. If $SU_{2n-1}(q)$ has an $MLS$ then  $PSU_{2n}(q)$ has an $MLS$.
\end{thm}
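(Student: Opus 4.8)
The plan is to realise $PSU_{2n}(q)$ as a transitive permutation group on its non-isotropic points and to apply Holmes' criterion (Theorem~\ref{thm6}), using the stabilizer of a non-isotropic point as the subgroup carrying an $MLS$ built from $SU_{2n-1}(q)$. First I would fix a non-isotropic vector $w$, so that $V=\langle w\rangle\perp w^\perp$ with $w^\perp$ a non-degenerate $(2n-1)$-dimensional Hermitian space. The isometries fixing $\langle w\rangle$ preserve this orthogonal decomposition, whence the stabilizer of $\langle w\rangle$ in $GU_{2n}(q)$ is $GU_1(q)\times GU_{2n-1}(q)$; intersecting with $SU_{2n}(q)$ and using the determinant relation $\mu\det(g)=1$ identifies the point stabilizer $M$ in $SU_{2n}(q)$ with $GU_{2n-1}(q)$ via $g\mapsto(\det(g)^{-1},g)$. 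A short order count, $|M|=(q+1)\,|SU_{2n-1}(q)|=|GU_{2n-1}(q)|$, would confirm this.

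Next I would pass to the simple group. Since scalar matrices fix every point, $Z(SU_{2n}(q))\leq M$, and the image $\overline M$ of $M$ in $PSU_{2n}(q)$ is exactly the stabilizer of $\langle w\rangle$ there. Writing $d=(2n,q+1)$, one checks $Z(SU_{2n}(q))\cap SU_{2n-1}(q)=1$ inside $M$ (because $d\mid 2n$ forces $(d,2n-1)=1$), so the copy of $SU_{2n-1}(q)$ survives injectively in $\overline M$ as a normal subgroup with cyclic quotient of order $m=(q+1)/d$. Granting that $SU_{2n-1}(q)$ has an $MLS$ and that every finite cyclic group has one, I would produce an $MLS$ for $\overline M$: take the $MLS$ of the normal subgroup $SU_{2n-1}(q)$ as the leading blocks, append the blocks of an $MLS$ of the cyclic quotient lifted through a cyclic transversal $\{1,t,\dots,t^{m-1}\}$ (a cyclic set in the sense of the paper), and observe that the length is minimal because minimal length is additive along $|\overline M|=|SU_{2n-1}(q)|\cdot m$. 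The factorisation is valid since projecting to the quotient determines the cyclic blocks uniquely and the remainder lands in the normal subgroup.

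With $\overline M$ equipped with an $MLS$ I would invoke Theorem~\ref{thm6}: $PSU_{2n}(q)$ acts transitively on the non-isotropic points, which form the coset space $PSU_{2n}(q)/\overline M$, so it suffices to exhibit a subgroup $K$ acting transitively on those points and admitting an $MLS$ over $K\cap\overline M$. Here the number of non-isotropic points equals $\frac{q^{2n-1}(q^{2n}-1)}{q+1}$, and $K$ must satisfy $[K:K\cap\overline M]$ equal to this value. To secure the third hypothesis of Theorem~\ref{thm6} I would try to choose $K$ solvable, so that Theorem~\ref{thm4} delivers the required $MLS$ over $K\cap\overline M$ once $K\cap\overline M$ is normal, Hall, or a $p$-subgroup of $K$; alternatively one can route the same step through the double-coset machinery of Theorem~\ref{thm5}, taking a subgroup meeting every conjugate of $\overline M$ trivially.

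I expect the construction and verification of this transitive subgroup $K$ to be the main obstacle. A parabolic subgroup is ruled out, since its order is not divisible by the relevant factor of $q^{2n}-1$, as one already sees for $PSU_4(q)$, and a unitary Singer subgroup is too small; so $K$ must come from the known factorisations of $PSU_{2n}(q)$ or from an explicit geometric transversal of the non-isotropic points. Matching its order to $\frac{q^{2n-1}(q^{2n}-1)}{q+1}$ while retaining enough solvability to apply Theorem~\ref{thm4} is precisely the delicate point, and is the most likely place for hypotheses restricting to special cases to enter. Once $K$ is in hand, Theorem~\ref{thm6} yields the $MLS$ for $PSU_{2n}(q)$ and the proof is complete.
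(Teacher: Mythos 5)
Your overall strategy---apply Holmes' criterion (Theorem~\ref{thm6}) with the stabilizer of a nonsingular point, which essentially contains $SU_{2n-1}(q)$, as the subgroup carrying the assumed $MLS$---is the same as the paper's, but your proof stops exactly where the real work begins. The whole content of the theorem is the production of a second subgroup $K$ that is transitive on the cosets of (the image of) $SU_{2n-1}(q)$ and has an $MLS$ over its intersection with that subgroup; you acknowledge this is ``the main obstacle'' and leave it unresolved, so what you have is a reduction of the statement to an unproved existence claim, not a proof.

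Moreover, the one concrete remark you make about this missing step points in the wrong direction: you assert that a parabolic subgroup is ruled out because its order is not divisible by the relevant factor of $q^{2n}-1$. In fact the stabilizer $P_n$ of a maximal totally isotropic subspace has Levi factor of type $GL_n(q^2)$, which contains a Singer torus of order $q^{2n}-1$, and the maximal factorization $PSU_{2n}(q)=P_n\cdot N_1$ (with $N_1$ the stabilizer of a nonsingular point) is precisely what makes the argument work. The paper fills the gap by citing the Li--Xia factorization
$PSU_{2n}(q)=\left(q^{n^2}{:}\tfrac{q^{2n}-1}{d(q+1)}\right)SU_{2n-1}(q)$ with $d=(q+1,2n)$: the solvable factor $H=q^{n^2}{:}\tfrac{q^{2n}-1}{d(q+1)}$ (a $p$-group extended by a cyclic group, sitting inside $P_n$) is transitive on the cosets of $SU_{2n-1}(q)$, its intersection with $SU_{2n-1}(q)$ is a $p$-subgroup, and Theorem~\ref{thm4}(4) then gives $H$ an $MLS$ over that intersection, so Theorem~\ref{thm6} applies. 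A further, minor, difference is that the paper works with $SU_{2n-1}(q)$ directly rather than with the full point stabilizer, which lets it skip your intermediate construction of an $MLS$ for the image of $GU_{2n-1}(q)$ (that construction is salvageable via a cyclic transversal of the normal copy of $SU_{2n-1}(q)$, but it is unnecessary).
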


\begin{proof}
By \cite[Theorem 1.1, Proposition 7.8 and Lemma 7.10]{16}, $$PSU_{2n}(q) = \left( q^{n^2}:\frac{q^{2n-1}}{d(q+1)}\right)SU_{2n-1}(q),$$
where $d = (q+1,2n)$. Suppose $H = q^{n^2}:\frac{q^{2n-1}}{d(q+1)}$ and $K=SU_{2n-1}(q)$. Then $|H \cap K| | q^{(m-1)^2}$ and so it is a $p-$subgroup of $H$. Since $H$ is a product of a $p-$group and a cyclic group, it is solvable. Hence by Theorem \ref{thm4}, $H$ has an $MLS$ over $H \cap K$ and by Theorem  \ref{thm6}, the group $PSU_{2n}(q)$ has an $MLS$ if $SU_{2n-1}(q)$ has an $MLS$.
\end{proof}

\begin{thm} \label{4.2}
Suppose $q = 2^n$, $n > 1$. If $q + 1$ or $q^2 - q + 1$ is prime then the projective special unitary group $PSU_3(q)$ has an $MLS$.
\end{thm}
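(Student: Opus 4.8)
The plan is to exhibit $PSU_3(q)$ as a group satisfying the hypotheses of Theorem~\ref{thm6}, using the well-understood subgroup structure of $PSU_3(q)$ for $q=2^n$. Recall that $|PSU_3(q)| = \frac{1}{d}q^3(q^2-1)(q^3+1)$ with $d=(3,q+1)$, and that for $q=2^n$ we have $d=1$ unless $3\mid n$; in any case the order factors as $q^3(q-1)(q+1)^2(q^2-q+1)/d$. The primes $q+1$ and $q^2-q+1$ appearing in the hypothesis are exactly the cyclotomic-type factors that govern the orders of a Borel (parabolic) subgroup and of a Singer-type cyclic torus, respectively. So the natural strategy is to take $H$ to be a maximal parabolic (Borel) subgroup $q^{1+2}{:}\frac{q^2-1}{d}$ of order $\frac{1}{d}q^3(q^2-1)$, and to take $K$ to be the normalizer of a Singer cyclic subgroup of order $\frac{q^2-q+1}{d}$ (a cyclic maximal torus), and then verify Holmes' three conditions.

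First I would establish that $H$ has an $MLS$. Since $H$ is a semidirect product of a $p$-group ($p=2$, so $q^{1+2}$ is a $2$-group) with a cyclic group of order $\frac{q^2-1}{d}$, it is solvable, and by Theorem~\ref{thm4}(3) every finite solvable group has an $MLS$ over its Hall subgroups; in particular $H$ itself has an $MLS$. Second I would identify $K$ and verify transitivity: the permutation representation of $PSU_3(q)$ on the cosets of the parabolic $H$ is the action on the $q^3+1$ isotropic points of $P(V)$, and a cyclic Singer subgroup of order dividing $q^2-q+1$ acts on these points in a controlled way. The role of the two primality hypotheses is to force the relevant index to be prime (or a prime power with a cyclic Sylow structure), so that the transitivity of $K$ and the existence of an $MLS$ for $K$ over $K\cap H$ both reduce to statements about cyclic groups of prime order, which are immediate since a group of prime order $p$ trivially has the $MLS$ $[\,\{e,x,\dots,x^{p-1}\}\,]$ of length $p$. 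When $q+1$ is prime, the cyclic factor $\frac{q^2-1}{d}=(q-1)(q+1)/d$ of $H$ has a clean prime divisor, and when $q^2-q+1$ is prime the Singer torus is cyclic of prime (or near-prime) order, so $K$ is abelian and $K\cap H$ is trivial or cyclic, making Theorem~\ref{thm4}(2) applicable to produce an $MLS$ for $K$ over $K\cap H$.

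The third and decisive condition of Theorem~\ref{thm6} is that $K$ acts transitively on the cosets of $H$, equivalently on the $q^3+1$ isotropic points. This is the step I expect to be the \textbf{main obstacle}, because a single cyclic Singer subgroup of order $\frac{q^2-q+1}{d}$ is generally too small to act transitively on $q^3+1$ points by itself. The resolution is to let $K$ be a larger cyclic torus or its normalizer whose order is divisible by $q+1$ as well, so that $|K|$ is a multiple of $q^3+1=(q+1)(q^2-q+1)$; the two primality assumptions are precisely what guarantee that such a $K$ exists as a cyclic (hence $MLS$-friendly) group and that it meets the parabolic $H$ in a subgroup whose index in $K$ equals $q^3+1$, yielding the transitive coset action. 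Verifying that this $K\cap H$ is the right small subgroup, and that $K$ has an $MLS$ over it, is where the arithmetic of $q=2^n$ and the primality of $q+1$ or $q^2-q+1$ must be used carefully.

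Finally, assembling these three verified conditions and invoking Theorem~\ref{thm6} directly yields that $PSU_3(q)$ has an $MLS$, completing the proof. A cleaner alternative, which I would pursue in parallel, is to apply Theorem~\ref{thm5} with the same $H$ and $K$: checking $H\cap gKg^{-1}=1$ for all $g$ (true when $K$ is a torus meeting every parabolic trivially except in the identity, which the primality hypotheses help ensure), computing the double coset decomposition, and confirming that each block $A_i$ in the transversal is either cyclic, a prime-order piece, or of size $4$. The factor $4=q^2/\dots$ naturally appears here because $q=2^n$, which is likely the reason the statement is restricted to characteristic~$2$; this makes the ``$|A_i|$ is $4$'' clause of Theorem~\ref{thm5} available precisely when needed.
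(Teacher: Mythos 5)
Your primary route, via Theorem~\ref{thm6}, founders on exactly the point you flag as the ``main obstacle'' and then do not close: you need a subgroup $K$ acting transitively on the $q^3+1$ cosets of the Borel subgroup $H=Q(q){:}K(q)$, and you propose to obtain it as a cyclic torus whose order is a multiple of $q^3+1=(q+1)(q^2-q+1)$. No such subgroup exists in $PSU_3(q)$: the maximal tori have orders $\frac{(q+1)^2}{d}$, $\frac{q^2-1}{d}$ and $\frac{q^2-q+1}{d}$, and the Singer subgroup of $PSU_3(q)$ (as opposed to $GU_3(q)$, whose Singer cycle has order $q^3+1$) has order only $\frac{q^2-q+1}{d}$ --- far too small to be transitive on $q^3+1$ points. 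The primality of $q+1$ or of $q^2-q+1$ does not conjure such a $K$ into existence, so the decisive third condition of Theorem~\ref{thm6} cannot be verified along the lines you sketch.

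The paper's actual argument is the route you mention only as a ``cleaner alternative,'' via Theorem~\ref{thm5}, but with specific choices you do not make and a mechanism different from the one you guess. When $q+1$ is prime (so $q\ge 4$ forces $d=1$), it takes $H$ cyclic of order $q^2-q+1$ (inside the maximal subgroup $Z_{(q^2-q+1)/d}{:}Z_3$) and $F=Q(q){:}K(q)$ of order $q^3(q^2-1)$; an elementary gcd computation shows $(|H|,|F|)=1$, hence $H\cap F^g=1$ for all $g$, every double coset has size $|H||F|$, and the number of double cosets is $\frac{|G|}{|H||F|}=q+1$, a prime. When $q^2-q+1$ is prime it instead takes $H\cong Z_{(q+1)/d}\times Z_{q+1}$ and $F\le Q(q){:}K(q)$ of order $q^3(q-1)$, giving exactly $q^2-q+1$ double cosets. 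In both cases $H$ and $F$ are solvable, hence have an $MLS$, and the transversal is a single block of prime size, so Theorem~\ref{thm5} applies. Your suggestion that the ``$|A_i|=4$'' clause is what the restriction to $q=2^n$ buys is off the mark: the clause is never used, and what the primality hypotheses actually deliver is that the \emph{number of double cosets} is prime. You never compute that count, and without it neither of your two routes goes through.
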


\begin{proof} Suppose $d = (3,q+1)$. Then the simple unitary group $PSU_3(q)$ has order $\frac{q^3(q^3+1)(q^2-1)}{d}$. Consider an automorphism $\tau$ of order $2$ in the field $G = GF(q^2)$ and $0 \ne k \in GF(q^2)$. Following
Fang et al. \cite{4}, we define:

\begin{center}
$k_\tau=\left(\begin{array}{lll} k^{\tau} & 0 & 0\\ 0 & k^{\tau -
1} & 0\\ 0 & 0 & k
\end{array}\right)$ and $Q(a,b) =\left(\begin{array}{lll} 1 & a & b\\ 0 & 1 & a^\tau\\ 0 & 0 &
1\end{array}\right)$,
\end{center}
where $a,b$ are elements of $GF(q^2)$ such that
\begin{equation}
a^{1+\tau}, b + b^\tau \in GF(q) \ \text{and} \ a^{1+\tau} + b + b^\tau = 0.
\end{equation}

Suppose $Q(q)$ is the subgroup generated by all matrices $Q(a,b)$ in which  $a, b$ satisfy the Equation 1 and $K(q) = \{ k_\tau \ | \ 0 \ne k \in GF(q^2)\} \cong Z_{\frac{q^2-1}{d}}$. By \cite[Chapter II, 10.12]{13}, $Q(q)$ is a $2-$Sylow subgroup of $PSU_3(q)$ with order $q^3$ and $N_{PSU_3(q)}(Q(q)) = Q(q):K(q)$. By a result in \cite{7},  each maximal subgroup of $PSU_3(q)$ is isomorphic to one
of the following groups:
\begin{enumerate}
\item $Q(q):K(q)$,

\item $Z_{\frac{q+1}{d}} \times PSL_2(q)$,

\item $\left(Z_{\frac{q+1}{d}} \times Z_{q+1}\right):S_3$,

\item $Z_{\frac{q^2-q+1}{d}}:Z_3$

\item $PSU_3(2^m)$, where $\frac{n}{m}$ is an odd prime greater than
$3$,

\item $PSU_3(2^m).Z_3$, where $\frac{n}{m}=2$ or $3$.
\end{enumerate}

\vskip 3mm

We first prove that $(q^3(q-1),(q+1)^2) = 1$
and if $q+1$ is prime then $$(q^3(q^2-1),q^2-q+1) = 1.$$ Notice
that the first part is a consequence of $(q^3,(q+1)^2) =
(q+1,q-1) = 1.$ To prove the second part, we assume that $a =
(q^2 - 1, q^2-q+1) > 1$ and $p$ is a prime factor of $a$.
Clearly, $(q-1,q+1) = 1$ and $p | q^2-1 = (q-1)(q+1)$. If $p | q
- 1$ then $p | q^2 - q + 1 + q - 1 = q^2$ and so $p = 2$, which
is impossible. If $p | q + 1$ then $p = q+1$. Since $p | q^2 - q
+ 1 = (q+1)(q-2) + 3$, $q=2$ which is our final contradiction.
Therefore, $a=1$ and $(q^3(q^2-1),q^2-q+1) = 1$.

Next we assume that $d = (3,q+1)$ and $q+1$ is a prime. Since $q \geq
4$, $d = 1$. From the structure of maximal subgroups of $G =
PSU_3(q)$, one can see that this group has a subgroup $H$ of
order $q^2 - q + 1$. Moreover, $G$ has another subgroup $F =
Q(q):K(q)$ of order $q^3(q^2-1)/d = q^3(q^2-1)$. By above argument, $|F|$ and
$|H|$ are coprime and so for each $g \in G$, $H \cap F^g = 1$. On
the other hand, $G = \bigcup_{i=1}^{q+1}Hg_iF$, for some $g_i \in
G$. Since $H$ and $F$ are solvable, by \cite[Proposition 3.1]{5},
they have $MLS$. Since $q+1$ is prime and $|PSU_3(q)| =
q^3(q+1)^2(q-1)(q^2-q+1)$, the subgroups $H$ and $F$ satisfies the
conditions of Theorem \ref{thm5} which implies that $G$ has an $MLS$.

Finally, we assume that $q^2-q+1$ is prime. Consider subgroups $H$
and $F$ of orders $\frac{(q+1)^2}{d}$ and $q^3(q-1)$,
respectively. By above argument, orders of $H$ and $F$ are
coprime and so for every $g\in G$, $H \cap F^g = 1$.  On the
other hand, $G = \bigcup_{i=1}^{q^2-q+1}Hg_iF$, for some $g_i \in
G$. Again $H$ and $F$ are solvable and so they have $MLS$. Since
$|PSU_3(q)| = q^3\frac{(q+1)^2}{d}(q-1)(q^2-q+1)$, the subgroups
$H$ and $F$ satisfies the conditions of Theorem \ref{thm5}
and $q^2-q+1$ is prime then simple unitary group $PSU_3(q)$ has
an $MLS$. This completes the second part  of our main theorem.
\end{proof}

\begin{thm} \label{4.3}
Suppose $q = 2^n$, $n > 1$. If $q + 1$ or $q^2 - q + 1$ is prime then the projective special unitary group $PSU_4(q)$ has an $MLS$.
\end{thm}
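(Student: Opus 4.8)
The plan is to bootstrap from the two results already proved: Theorem \ref{4.1}, which reduces the existence of an $MLS$ for $PSU_4(q)$ to that for $SU_3(q)$ (taking its index parameter equal to $2$, so that $2n-1 = 3$), and Theorem \ref{4.2}, which produces an $MLS$ for $PSU_3(q)$ under exactly the hypotheses assumed here. The bridge between the special and the projective group is the identity $PSU_3(q) = SU_3(q)/Z(SU_3(q))$ with $|Z(SU_3(q))| = d$, where $d = (3,q+1)$; hence as soon as I know $d = 1$ I may identify $SU_3(q)$ with $PSU_3(q)$.

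The first step is therefore to verify that $d = 1$ under each alternative of the hypothesis. If $q+1$ is prime then, since $q = 2^n \geq 4$, the number $q+1 \geq 5$ is a prime different from $3$, so $3$ does not divide $q+1$ and $d = 1$. If instead $q^2-q+1$ is prime, I argue by the parity of $n$. Since $2 \equiv -1 \pmod 3$ we have $q \equiv (-1)^n \pmod 3$; when $n$ is odd one computes $q^2-q+1 \equiv 1-(-1)+1 \equiv 0 \pmod 3$, so the primality of $q^2-q+1$ forces $q^2-q+1 = 3$, that is $q = 2$, contradicting $n > 1$. Hence $n$ is even, whence $q \equiv 1 \pmod 3$, $q+1 \equiv 2 \pmod 3$, and again $d = 1$.

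With $d = 1$ in hand, the centre $Z(SU_3(q))$ is trivial, so $SU_3(q) \cong PSU_3(q)$. Theorem \ref{4.2} now guarantees an $MLS$ for $PSU_3(q)$, and therefore for $SU_3(q)$, under the present hypotheses. Feeding this into Theorem \ref{4.1} in the case of $PSU_4(q)$, where the relevant subgroup is precisely $SU_3(q)$, yields an $MLS$ for $PSU_4(q)$, which is the desired conclusion.

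The argument is essentially a chain of earlier theorems, so I do not anticipate a genuine structural obstacle. The only point demanding care is the elementary number-theoretic reduction to $d = 1$: in particular the branch where $q^2-q+1$ is prime hinges on the congruence computation ruling out odd $n$, and this must be done cleanly, since the whole identification $SU_3(q) = PSU_3(q)$ --- and hence the entire proof --- breaks down if $d = 3$.
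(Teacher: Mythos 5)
Your proof is correct and takes essentially the same route as the paper, whose entire argument for this theorem is the one-line citation of Theorems \ref{4.1} and \ref{4.2}. Your additional number-theoretic check that $d=(3,q+1)=1$ under either hypothesis --- so that $SU_3(q)\cong PSU_3(q)$ and Theorem \ref{4.2} genuinely supplies the hypothesis of Theorem \ref{4.1} --- fills in a step the paper leaves entirely implicit, and it is worth making explicit.
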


\begin{proof}
The proof follows from Theorems \ref{4.1} and \ref{4.2}.
\end{proof}

\begin{thm} \label{4.4}
Suppose $q$ is an odd prime power. Then,
\begin{enumerate}
\item If $q^2 - q + 1$ is prime then $PSU_3(q)$ has an $MLS$.

\item If $q > 5$ and $q + 1 = 2p$, $p$ is prime, then $PSU_3(q)$ has an $MLS$.
\end{enumerate}
\end{thm}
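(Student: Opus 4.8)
The plan is to mirror, almost verbatim, the two-part argument used for $PSU_3(q)$ in the even case (Theorem~\ref{4.2}), since the structural ingredients are identical and only the arithmetic bookkeeping changes when $q$ is odd. The key observation is that Theorem~\ref{thm5} only requires two subgroups $H,F$ of coprime order whose double-coset count $n=[G:H]/[G:F]^{\pm}$ factors into cyclic sets, prime-order pieces, or blocks of size~$4$, together with the fact that both $H$ and $F$ are solvable (hence possess an $MLS$ by \cite[Proposition~3.1]{5}). For odd $q$ we have $|PSU_3(q)| = \frac{1}{d}q^3(q+1)^2(q-1)(q^2-q+1)$ with $d=(3,q+1)$, and the maximal-subgroup list from \cite{7} quoted in Theorem~\ref{4.2} still supplies subgroups of orders $\frac{q^2-q+1}{d}$, $\frac{(q+1)^2}{d}$, and $q^3(q^2-1)/d$ from which to build $H$ and $F$.

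For part~(1), I would take $H$ of order $q^2-q+1$ (arising from the Singer-type maximal subgroup $Z_{(q^2-q+1)/d}:Z_3$) and $F=Q(q){:}K(q)$ of order $q^3(q^2-1)/d$, exactly as in the prime-$(q+1)$ case. The first task is to re-verify coprimality: one shows $\bigl(q^3(q^2-1),\,q^2-q+1\bigr)=1$. The argument in Theorem~\ref{4.2} that ruled out common prime factors $p\mid q-1$ (forcing $p=2$) and $p\mid q+1$ (forcing $q=2$) used only elementary congruences and should survive for odd $q>2$, giving $H\cap F^g=1$ for all $g$. Then $[G:F]=q^2-q+1$, so $G=\bigcup_{i=1}^{q^2-q+1}Hg_iF$, and since $q^2-q+1$ is prime the single block of double-coset representatives has prime size; Theorem~\ref{thm5} then delivers the $MLS$. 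The one point needing care in the odd case is the factor $d$: one must check that when $d=3$ the orders still come out coprime and the double-coset count is still $q^2-q+1$ (a prime), rather than a proper divisor.

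For part~(2), with $q+1=2p$ and $p$ prime, the natural choice is $H$ of order $\frac{(q+1)^2}{d}$ and $F$ of order $q^3(q-1)$, paralleling the final paragraph of Theorem~\ref{4.2}. Coprimality $\bigl(q^3(q-1),(q+1)^2\bigr)=1$ follows from $(q^3,(q+1)^2)=(q+1,q-1)=1$ as before. The double-coset count is then $[G:F]=\frac{(q+1)^2}{d}$, and here is where the hypothesis $q+1=2p$ is doing real work: to invoke Theorem~\ref{thm5} I must factor $\frac{(q+1)^2}{d}$ into admissible blocks. When $d=1$ this number is $(2p)^2=4p^2$, whose prime factorization $2\cdot 2\cdot p\cdot p$ can be realized by blocks of sizes $2,2,p,p$ (each prime), or one may group a $2\cdot2$ into a block of size~$4$; when $d=3$ one needs $3\mid q+1$ simultaneously, forcing $p=3$ or a congruence that must be tracked. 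Writing $\{g_i\}=\prod A_j$ with each $|A_j|\in\{2,4,p\}$ meets the hypotheses of Theorem~\ref{thm5}, and solvability of $H$ and $F$ supplies their $MLS$.

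The main obstacle I anticipate is precisely the interaction with $d=(3,q+1)$ in the odd case, which is invisible in Theorem~\ref{4.2} because there $q\ge 4$ even forces $q+1$ prime and hence $d=1$. For odd $q$ one cannot dismiss $d=3$ so cheaply, so the careful step is to confirm, in each of parts~(1) and~(2), that the chosen subgroup orders remain coprime after dividing by $d$ and that the resulting double-coset index is genuinely prime (part~1) or admits the required factorization into cyclic or prime-or-$4$ blocks (part~2). A secondary check is existence of the relevant maximal subgroups: the list in \cite{7} as quoted is stated with factors of $d$, so I would confirm that the subgroups of orders $\frac{q^2-q+1}{d}$, $\frac{(q+1)^2}{d}$, and $q^3(q-1)$ (a solvable section of $F=Q(q){:}K(q)$) actually occur for odd $q$, after which the two applications of Theorem~\ref{thm5} complete the proof.
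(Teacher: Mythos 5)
Your overall strategy (Theorem \ref{thm5} applied to two solvable subgroups whose conjugates intersect trivially) is the right one, but you have paired the subgroups with the wrong hypotheses, and the double-coset counts do not come out as you claim. In part (1) you take $|H|=q^2-q+1$ and $|F|=q^3(q^2-1)$; since all the intersections $H\cap F^g$ are trivial, every double coset $Hg_iF$ has size $|H||F|$, so the number of double cosets is $|G|/(|H||F|)=q+1$, not $q^2-q+1$ (you have conflated $[G:F]$ with the number of $(H,F)$ double cosets). For odd $q$ the number $q+1$ is even and, except for $q=3$, is neither prime nor $4$, so the hypothesis that $q^2-q+1$ is prime buys you nothing with this choice of subgroups. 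The paper instead mirrors the \emph{last} paragraph of Theorem \ref{4.2}: it builds $H\leq S_\alpha$ of order $q^3(q-1)$ inside the stabilizer of an isotropic point and takes $K\cong Z_{q+1}\times Z_{(q+1)/d}$ of order $(q+1)^2/d$, quoting Mitchell \cite{26} for odd characteristic (Hartley's list \cite{7}, which you invoke, is stated only for $GF(2^n)$), so that the double-coset count is exactly $q^2-q+1$, prime by hypothesis, and a single prime-size block suffices.

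The same swap breaks part (2): with $|H|=(q+1)^2/d$ and $|F|=q^3(q-1)$ the double-coset count is $q^2-q+1$, not $(q+1)^2/d=4p^2$, and part (2) makes no primality assumption on $q^2-q+1$, so the argument stalls there. The paper instead takes $H=S_\alpha$ of order $q^3(q^2-1)$ and $K$ a Singer subgroup of order $q^2-q+1$, which yields count $q+1=2p$. Even with the correct count, one cannot conclude from the arithmetic factorization $2p=2\cdot p$ alone: Theorem \ref{thm5} requires an actual factorization of the \emph{set} of representatives as a product $A_1A_2$ of subsets with $|A_1|=2$ and $|A_2|=p$. Producing this factorization is the genuinely new content of the odd case and is entirely absent from your proposal; the paper constructs $A_1$ and $A_2$ explicitly by exploiting the $2$-transitivity of $PSU_3(q)$ on the $q^3+1$ isotropic points together with a pairing of the $q+1$ orbits of $K$ on $\Omega$. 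Your concern about $d=(3,q+1)$ is legitimate but resolves itself: in part (1), $d=3$ would force $3\mid q^2-q+1$, hence $q^2-q+1=3$ and $q=2$, contradicting $q$ odd; in part (2), $q+1=2p$ with $p>3$ forces $d=1$. Neither observation repairs the two structural problems above.
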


\begin{proof}
Let $\Omega$  be the set all isotropic points of $PG(2,q)$ of size $q^3+1$ and consider the
permutation representation of $PSU_3(q)$ on $\Omega$ \cite{25}. Then the stabilizer of an isotropic point $\alpha$ in $PSU_3(q)$ has the form $S_\alpha = q.q^2:\frac{q^2-1}{d}$, where $d = (3,q+1)$. Since $d|q+1$, $S_\alpha$ has a subgroup of order $q - 1$. On the other hand, the group $S_\alpha$ has a normal subgroup of order $q^3$ and so we can construct a subgroup $H$ of $S_\alpha$ of order $q^3(q-1)$. By \cite{26}, $PSU_3(q)$ has a subgroup isomorphic to $K = Z_{q+1} \times Z_{\frac{q+1}{d}}$ of order $\frac{(q+1)^2}{d}$. Since $(|H|, |K|) = 1$, the subgroups $H$ and $K$ satisfies conditions of Theorem \ref{thm5}.  The subgroups $H$ and $K$ are solvable and so they have  $MLS$. Therefore, if  $q^2 - q + 1$ is prime then by Theorem \ref{thm5}, the projective special unitary group $PSU_3(q)$ has an $MLS$.

We now assume that $q > 5$ and $q + 1 = 2p$, where $p$ is prime. This shows that $p > 3$ and $d = 1$. Again, we consider the permutation representation of $PSU_3(q)$ on  $\Omega$. Let $H=S_\alpha$, the stabilizer of an isotropic point $\alpha$, and Choose $K$ to be Singer cycle subgroup of order $\frac{q^2-q+1}{d} = q^2 - q + 1$. Then $H$ and $K$ satisfies the conditions of Theorem \ref{thm5}. Since $q + 1 = 2p$ and by \cite{34}, the group $PSU_3(q)$ is $2-$transitive on $\Omega$.  Define $A_1 = \{ x_1,x_2\}$, where $x_1, x_2 \in PSU_3(q)$,  $\alpha^{x_1} = \beta_1$ and $\alpha^{x_2} = \beta_2$. The subgroup $K$ has $q+1$ orbits. Since $PSU_3(q)$ is $2-$transitive, if $\Delta_1$ and $\Delta_2$ are a pair of orbits of $K$, then there exists an element $y \in PSU_3(q)$ such that $\beta_1^y \in \Delta_1$ and $\beta_2^y \in \Delta_2$. Consider orbits of $K$ as $p$ different pairs then set $A_2 = \{ y_1, \ldots, y_p\}$ that $y_i\in PSU_3(q)$ is corresponding to $i$-th pair of orbits for $1 \leq i \leq p$. It is clear that $H$ and $K$ are solvable and so they have $MLS$. Finally, since the size of $A_1$ and $A_2$ are prime,  by Theorem \ref{thm5}, the unitary group $PSU_3(q)$ has $MLS$.
\end{proof}

\begin{thm}
Suppose $q$ is an odd prime power. Then,
\begin{itemize}
\item If $q^2 - q + 1$ is prime then $PSU_4(q)$ has an $MLS$.

\item If $q > 5$ and $q + 1 = 2p$, $p$ is prime, then $PSU_4(q)$ has an $MLS$.
\end{itemize}
\end{thm}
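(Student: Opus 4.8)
The plan is to follow the pattern of Theorem \ref{4.3}, reducing the four-dimensional group to the three-dimensional one and then importing Theorem \ref{4.4}. Applying Theorem \ref{4.1} with $n=2$ writes $PSU_4(q)$ as a product of a solvable group with $SU_3(q)$ and shows that $PSU_4(q)$ has an $MLS$ as soon as $SU_3(q)$ does. So everything reduces to producing an $MLS$ for $SU_3(q)$ under each hypothesis. Theorem \ref{4.4} furnishes an $MLS$ for the simple quotient $PSU_3(q)=SU_3(q)/Z$, where $|Z|=d=(3,q+1)$, and the remaining task is to transfer it from the quotient up to $SU_3(q)$.

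I would first dispose of the cases with $d=1$, where $SU_3(q)=PSU_3(q)$ and there is nothing to do. In the second item $q+1=2p$ with $p$ prime and $q>5$ forces $p>3$, so $3\nmid 2p$ and $d=1$; Theorem \ref{4.4}(2) then gives $SU_3(q)$ an $MLS$ directly, and Theorem \ref{4.1} closes this item. The same observation handles the first item whenever $q\not\equiv 2\pmod 3$.

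The substantive case is the first item with $d=3$, where $SU_3(q)$ is a central triple cover of $PSU_3(q)$. The key point is that the subgroup $H$ of order $q^3(q-1)$ used in Theorem \ref{4.4}(1) has order prime to $3$ (indeed $q\equiv 2\pmod 3$ gives $q^3(q-1)\equiv 2\pmod 3$), so its full preimage in $SU_3(q)$ splits as $Z\times\tilde H$ with $\tilde H\cong H$; in particular $\tilde H\cap Z=1$. Taking $\tilde K=\eta^{-1}(K)$, the torus $Z_{q+1}\times Z_{q+1}$ of order $(q+1)^2$, one checks that the trivial-intersection property passes upward: if $x\in\tilde H\cap g\tilde K g^{-1}$ then $\eta(x)$ lies in $H\cap \eta(g)K\eta(g)^{-1}=1$, so $x\in Z\cap\tilde H=1$. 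Since $|\tilde H|\,|\tilde K|\,(q^2-q+1)=q^3(q-1)(q+1)^2(q^2-q+1)=|SU_3(q)|$, both subgroups are solvable hence have an $MLS$, and the prime $q^2-q+1$ is the number of double cosets, Theorem \ref{thm5} yields an $MLS$ for $SU_3(q)$; Theorem \ref{4.1} then completes the proof.

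The hard part is the trivial-intersection hypothesis $H\cap gKg^{-1}=1$ that underlies Theorem \ref{4.4}(1) itself. For odd $q$ the orders $|H|=q^3(q-1)$ and $|K|=(q+1)^2/d$ are both even, so this cannot be read off from a coprimality of orders and must instead be justified geometrically, by analysing how the torus $K$ meets conjugates of the point-stabiliser and ruling out a common involution such as the image of $\mathrm{diag}(-1,-1,1)$. Once that geometric fact is secured in $PSU_3(q)$, the lift above is automatic, and the only routine remainders are the existence of $\tilde H$ and $\tilde K$ with the stated orders and the count of exactly $q^2-q+1$ double cosets. Should the common involution obstruct the naive choice, the fallback is to replace one block or to verify instead the Holmes criterion of Theorem \ref{thm6}.
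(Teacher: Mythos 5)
The paper's entire proof of this theorem is the single sentence ``The proof follows from Theorems \ref{4.1} and \ref{4.4},'' so your overall skeleton --- reduce $PSU_4(q)$ to $SU_3(q)$ via Theorem \ref{4.1}, then invoke Theorem \ref{4.4} --- is exactly the paper's route. What you add, and what the paper silently skips, is the observation that Theorem \ref{4.1} demands an $MLS$ for $SU_3(q)$ while Theorem \ref{4.4} only delivers one for the quotient $PSU_3(q)$; these coincide precisely when $d=(3,q+1)=1$, which covers the second item (since $q>5$ and $q+1=2p$ force $p>3$, hence $3\nmid q+1$) but not the first item when $q\equiv 2\pmod 3$. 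Your repair for the case $d=3$ is sound: since $|H|=q^3(q-1)$ is coprime to $3$, the preimage of $H$ in $SU_3(q)$ has a central Sylow $3$-subgroup and hence (Burnside/Schur--Zassenhaus) splits as $Z\times\tilde H$; taking $\tilde K$ to be the full preimage of the torus, the trivial-intersection property descends through $\eta$ and the order count $|\tilde H|\,|\tilde K|\,(q^2-q+1)=|SU_3(q)|$ checks out, so Theorem \ref{thm5} applies upstairs. This makes your version strictly more complete than the paper's.

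You are also right to flag the one point that neither you nor the paper resolves: the proof of Theorem \ref{4.4}(1) asserts $(|H|,|K|)=1$ for $|H|=q^3(q-1)$ and $|K|=(q+1)^2/d$, which is false for odd $q$ since both orders are even; the hypothesis $H\cap gKg^{-1}=1$ of Theorem \ref{thm5} therefore needs a genuine group-theoretic or geometric justification (e.g.\ ruling out a shared involution), not a coprimality argument. Since the present theorem inherits whatever is proved in Theorem \ref{4.4}, this is a defect of the cited result rather than of your reduction, but it does mean the first item is not fully established by either argument as written.
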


\begin{proof}
The proof follows from Theorems \ref{4.1} and  \ref{4.4}.
\end{proof}

\section{Concluding Remarks}
The present authors in \cite{28} applied the results of \cite{11} to prove that all finite simple groups of orders $\leq 10^{12}$ have $MLS$ other than the Ree group $Ree(27)$, the O'Nan group $O'N$ and the untwisted group  $G_2(7)$. There are $12$ unitary groups of orders $\leq 10^{12}$. These are $PSU_3(9)$, $PSU_3(13),$ $PSU_3(17)$, $PSU_4(5),$ $PSU_3(19),$ $PSU_3(23),$ $PSU_3(25)$, $PSU_3(29)$, $PSU_5(3)$, $PSU_3(27)$,
$PSU_3(32)$ and $PSU_3(31)$. In this paper, it is shown that the proof of the main result of \cite{11} regarding to existence of an $MLS$ for projective special unitary groups is not correct and so the existence of an $MLS$ for these groups have to be checked.

In \cite{5,14}, the existence of an $MLS$ for the unitary simple
groups $PSU_3(2^2)$  and $PSU_3(2^4)$ were proved. By Theorem \ref{4.2}, simple unitary groups  $PSU_3(2^8)$, $PSU_3(2^{16})$ and $PSU_3(2^{32})$ have $MLS$. By \cite{5}, the simple unitary group $PSU_3(5)$ has $MLS$. On the other hand, by Theorem \ref{4.1} the unitary group $PSU_4(5)$ and by Theorem \ref{4.3}(2) the simple groups $PSU_3(9)$, $PSU_3(13)$ and $PSU_3 (25)$ have $MLS$. Therefore, the existence of an $MLS$ for eight simple unitary groups of order $\leq 10^{12}$ is not still solved.

\vskip 3mm

\noindent\textbf{Acknowledgement.}  The research of the  authors are partially supported by INSF under grant number 93010006.

\end{document}